\documentclass[10pt,a4paper]{amsart} 
\usepackage{amssymb,amsthm,amsmath,amsfonts,amscd}
\usepackage{graphicx}
\usepackage{url}
\allowdisplaybreaks[1]
\usepackage[english]{babel}
\usepackage{enumerate}
\usepackage{cite}
\usepackage{bbm}
\usepackage{stmaryrd}
\usepackage{mathrsfs}


\newcommand{\norm}[1]{\left\lVert#1\right\rVert}

\newcommand{\abs}[1]{\lvert#1\rvert}
\newcommand{\lrabs}[1]{\left\lvert#1\right\rvert}

\newcommand{\lrbr}[2]{\left\langle#1,{#2}\right\rangle}


\newcommand{\Hscr} {{\mathscr H}}

\newcommand{\up}[1]{\textup{#1}}

\renewcommand{\tilde}{\widetilde}

\renewcommand{\d}{\textup{d}}
\newcommand{\D}{\textup{D}}

\renewcommand{\phi}{\varphi}


\newcommand{\R}{\mathbbm{R}}
\newcommand{\N}{\mathbbm{N}}



\renewcommand{\div}{\operatorname{div}}

\newcommand{\supp}{\operatorname{supp}}

\newcommand{\vol}{\operatorname{vol}}

\newcommand{\dom}{\operatorname{dom}}

\newcommand{\loc}{\textup{loc}}

\newcommand{\sgn}{\operatorname{sgn}}

\newcommand{\BIGOP}[1]{\mathop{\mathchoice%
{\raise-0.22em\hbox{\huge $#1$}}%
{\raise-0.05em\hbox{\Large $#1$}}{\hbox{\large $#1$}}{#1}}}

\newcommand{\BIGboxplus}{\mathop{\mathchoice%
{\raise-0.35em\hbox{\huge $\boxplus$}}%
{\raise-0.15em\hbox{\Large $\boxplus$}}{\hbox{\large $\boxplus$}}{\boxplus}}}

\numberwithin{equation}{section}

\newtheorem{theorem}{Theorem}[section] 

\theoremstyle{plain}
\newtheorem{defi}[theorem]{Definition}
\newtheorem{lem}[theorem]{Lemma}

\newtheorem{rem}[theorem]{Remark}

\newtheorem{thm}[theorem]{Theorem}

\newtheorem*{bem*}{Bemerkung}

\newcommand{\ol}{\overline}

\newcommand{\Sgn}{\operatorname{Sgn}}

\title[Convergence of the $p$-Laplace evolution equation]{Convergence of solutions to the $p$-Laplace evolution equation as $p$ goes to $1$}
\author[J. M. T\"olle]{Jonas M. T\"olle}
\address{Institut f\"ur Mathematik, Technische Universit\"at Berlin (MA 7-5), Stra\ss{}e des 17. Juni 136, 10623 Berlin, Germany}
\email{toelle@math.tu-berlin.de}
\date{\today}
\keywords{Singular diffusion equation, parabolic $p$-Laplace equation, parabolic $1$-Laplace equation, total variation flow, linear growth functional, variational convergence, Mosco convergence}
\subjclass[2000]{35K67, 49J45}

\begin{document}
\begin{abstract}
We prove that the set of solutions to the parabolic singular $p$-Laplace equation with Dirichlet boundary conditions on a bounded Lipschitz domain $\Omega$ for all space dimensions is continuous in the parameter $p\in [1,+\infty)$ and the initial
data. The highly singular limit case $p=1$ is included. In particular, we show that
the solutions $u_p$ converge strongly in $L^2(\Omega)$, uniformly in time, to the solution $u_1$ of the parabolic $1$-Laplace equation as $p\to 1$.
\end{abstract}

\maketitle

\section{Introduction}

Let $\Omega\subset\R^d$, $d\in\N$ be an open bounded domain with Lipschitz boundary $\partial\Omega$.
Consider the following family of singular diffusion equations indexed by $p\in(1,+\infty)$,
\begin{equation}\tag{$p_D$}\left\{\begin{aligned}
\dfrac{\partial u_p}{\partial t}-\div\left[\abs{\nabla u_p}^{p-2}\nabla u_p\right]&= f_p&&\text{in}\quad(0,+\infty)\times\Omega,\\
u_p&=0&&\text{on}\quad(0,+\infty)\times\partial\Omega,\\
u_p(0)&=x_p,&&\end{aligned}\right.
\end{equation}
where $x_p\in L^2(\Omega)$ and $f_p\in L^2_\loc((0,+\infty);L^2(\Omega))$.
The notation ($p_D$) refers to Dirichlet boundary conditions.

Consider also
\begin{equation}\tag{$1_D$}\left\{\begin{aligned}
\dfrac{\partial u_1}{\partial t}-\div\left[\Sgn\left(\nabla u_1\right)\right]&\ni f_1&&\text{in}\quad(0,+\infty)\times\Omega,\\
u_1&=0&&\text{on}\quad(0,+\infty)\times\partial\Omega,\\
u_1(0)&=x_1,&&\end{aligned}\right.
\end{equation}
where $x_1\in L^2(\Omega)$ and $f_1\in L^2_\loc((0,+\infty);L^2(\Omega))$ and
the vector-valued (and multi-valued) sign-operator $\Sgn:\R^d\to 2^{\R^d}$
is defined by
\[\Sgn(x):=\left\{\begin{aligned}&\dfrac{x}{\abs{x}},&&\;\;\text{if}\;\;x\in\R^d\setminus\{0\},\\
                  &\left\{y\in\R^d\;\vert\;\abs{y}\le 1\right\},&&\;\;\text{if}\;\;x=0.
                 \end{aligned}\right.\]
In fact, the expression ``$\div\left[\Sgn\left(\nabla \cdot\right)\right]$'' is
merely heuristic, we shall later replace it by the subdifferential $\partial\Phi_1$,
where the convex potential $\Phi_1$ is defined in \eqref{phi1defi} below.

In this work, we consider weak variational solutions to equation ($p_D$), $p\in [1,+\infty)$,
defined as limits of Yosida-regularized equations. We are interested
in continuity properties of ($p_D$) in the initial data and, in particular,
when the parameter $p$ varies. The \emph{$p$-Laplace evolution} ($p_D$) belongs to
the class of gradient flow-type equations, where the dynamics is generated
by the (infinite dimensional) gradient of a differentiable functional.

In order to prove
the desired continuity of solutions in $p$, we shall apply a theorem
by Attouch, stating that so-called \emph{Mosco continuity} of convex,
lower semi-continuous potentials implies the continuous dependence
of the solutions to the related, subgradient-type evolution equations, see \cite{A3}.
The classical theorem requires a Hilbert space framework, which explains
our choice of $L^2$-data. For Banach spaces, variational convergence and viscosity solutions
are brought together in \cite{A4}.

Inclusion ($1_D$) requires some attention. It is called \emph{total variation flow}, see \cite{ABCM,ACM}. We shall
model it by the gradient of the total-variation functional
on functions of bounded variation in $L^2(\Omega)$.

We shall also consider the $p$-Laplace evolution with Neumann boundary conditions,
\begin{equation}\tag{$p_N$}\left\{\begin{aligned}
\dfrac{\partial u_p}{\partial t}-\div\left[\abs{\nabla u_p}^{p-2}\nabla u_p\right]&= f_p&&\text{in}\quad(0,+\infty)\times\Omega,\\
\dfrac{\partial u_p}{\partial\nu}&=0&&\quad\text{on}\quad(0,+\infty)\times\partial\Omega,\\
u_p(0)&=x_p,&&\end{aligned}\right.
\end{equation}
where $\nu$ is the generalized outward normal on $\partial\Omega$.

Also
\begin{equation}\tag{$1_N$}\left\{\begin{aligned}
\dfrac{\partial u_1}{\partial t}-\div\left[\Sgn\left(\nabla u_1(t)\right)\right]&\ni f_1(t)&&\text{in}\quad(0,+\infty)\times\Omega,\\
\dfrac{\partial u_1}{\partial\nu}&=0&&\quad\text{on}\quad(0,+\infty)\times\partial\Omega,\\
u_1(0)&=x_1,&&\end{aligned}\right.
\end{equation}
where again the expression ``$\div\left[\Sgn\left(\nabla \cdot\right)\right]$''
is made rigorous as a subdifferential $\partial\Psi_1$ of the potential
$\Psi_1$ defined in \eqref{psi1defi}.

The singular diffusion operator in equations ($p_D$), ($p_N$) is called
\emph{$p$-Laplacian}. Both equations of evolution-type are covered by the theory of nonlinear semigroups associated to equations of subdifferential-type
in Hilbert space as in the book of Br\'ezis \cite{Brez}. For the notion of solutions used in
this paper, see Definition \ref{soldefi} below. Explicit representations for the subdifferential operators were
given by Schuricht \cite{Schu} for inclusion $(1_D)$ and by Andreu, Ballester, Caselles and Maz\'on \cite{ABCM} for inclusion $(1_N)$.

Classically, the $p$-Laplace equation appears in geometry,
quasi-regular mappings, fluid dynamics and plasma physics, see \cite{DiBe,Diaz}.
In \cite{Lad}, Lady\v{z}enskaja suggests the $p$-Laplace evolution as a model of motion
of non-Newtonian fluids. A typical $2$-dimensional application can be found in
image restoration, see \cite[Ch. 3]{AuKo}
for a comprehensive treatment.
General equations of $p$-type are studied in
\cite{Vaz2}.

Eigenvalue problems for the $1$-Laplacian have been studied e.g. by Fridman, Kawohl, Schuricht and Parini \cite{Fri,Schu,KaSchu,Par}.

The stochastic analog to ($p_D$) is being examined by Liu \cite{Liu1}, respectively, to ($1_D$) by Barbu, Da Prato and R\"ockner \cite{BDPR}.
Recently, in the case of stochastic equations, similar results to this paper have been proved by Ciotir and the author in \cite{CioToe1}. The proof in this paper, however, originates
from the author's thesis \cite[Ch. 8.3]{Toe2}, where variable-space-approach, similar to
that in \cite{AttCom}, is used.
Previous convergence results for the stationary problem have been obtained by Mercaldo, Rossi, Segura de Le\'on and Trombetti in
\cite{MSdLT,MSdLT2,MRSdLT}. Convergence of the evolution problem for local solutions has been previously investigated
by Giga, Kashima and Yamazaki in \cite{GKY}.

Let us formulate the main result of this paper. Its proof can be found in section \ref{msec} below. The notion of solution is specified in section \ref{msecs} below,
see Definition \ref{soldefi}.
\begin{thm}\label{mainthm}
Let $p_0\in[1,+\infty)$, $\{p_n\}\subset (1,+\infty)$ with $\lim_n p_n=p_0$. Let $T>0$.
Suppose that $\lim_n\norm{x_{p_n}-x_{p_0}}_{L^2(\Omega)}=0$ and $\lim_n\int_0^T\norm{f_{p_n}(t)-f_{p_0}(t)}^2_{L^2(\Omega)}\,\d t=0$.
Denote by $u_p$ the solution to equation $(p_D)$ in the sense of Definition \ref{soldefi}
and by $v_p$ the solution to equation $(p_N)$ Definition \ref{soldefi}.

Then
\begin{equation}
\lim_n\sup_{t\in [0,T]}\norm{u_{p_n}(t)-u_{p_0}(t)}_{L^2(\Omega)}=0
\end{equation}
and
\begin{equation}
\lim_n\sup_{t\in [0,T]}\norm{v_{p_n}(t)-v_{p_0}(t)}_{L^2(\Omega)}=0.
\end{equation}
\end{thm}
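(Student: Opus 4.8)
The plan is to recognize both $(p_D)$ and $(p_N)$ as gradient-flow inclusions in the Hilbert space $L^2(\Omega)$, generated by the subdifferentials of convex, lower semi-continuous energy functionals $\Phi_p$ and $\Psi_p$, whose endpoint representatives $\Phi_1$ and $\Psi_1$ are the ones referred to in the introduction. For $p\in(1,+\infty)$ one takes $\Phi_p(u)=\frac1p\int_\Omega\abs{\nabla u}^p\,\d x$ on $W^{1,p}_0(\Omega)$, extended by $+\infty$ on the rest of $L^2(\Omega)$, and the analogous $\Psi_p$ on $W^{1,p}(\Omega)$ for the Neumann case; at $p=1$ these are replaced by their $L^2$-relaxations on $BV(\Omega)$. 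Since the solutions of Definition \ref{soldefi} are defined through Yosida regularization, the theory of Br\'ezis \cite{Brez} identifies $u_p$ as the unique strong solution of $\frac{\d u_p}{\d t}(t)+\partial\Phi_p(u_p(t))\ni f_p(t)$, $u_p(0)=x_p$, and likewise $v_p$ with $\Psi_p$. The statement then reduces to a stability result for such subgradient flows under joint perturbation of the generating functional and of the data.

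The central step is to prove that the potentials converge in the sense of Mosco along $p_n\to p_0$, i.e.
\[\Phi_{p_n}\xrightarrow{M}\Phi_{p_0},\qquad\Psi_{p_n}\xrightarrow{M}\Psi_{p_0}.\]
This requires the two usual conditions. For the \emph{liminf inequality} one must show that $u_n\rightharpoonup u$ weakly in $L^2(\Omega)$ forces $\liminf_n\Phi_{p_n}(u_n)\ge\Phi_{p_0}(u)$; assuming the energies stay bounded, H\"older's inequality on the bounded domain $\Omega$ turns control of $\int_\Omega\abs{\nabla u_n}^{p_n}\,\d x$ into an $L^1$-bound on the gradients, after which weak lower semicontinuity of the $p_0$-energy (for $p_0>1$) or of the total variation (for $p_0=1$) gives the claim. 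For the \emph{recovery sequence} one must, for each $u$ with $\Phi_{p_0}(u)<+\infty$, produce $u_n\to u$ strongly in $L^2(\Omega)$ with $\Phi_{p_n}(u_n)\to\Phi_{p_0}(u)$; when $p_0>1$ this follows by approximating $u$ by smooth compactly supported functions (on which $\int\abs{\nabla u}^{p_n}\to\int\abs{\nabla u}^{p_0}$ by dominated convergence) and a diagonal argument, while for $p_0=1$ one approximates the given $BV$-function in the area-strict topology so that the $W^{1,p_n}_0$-energies converge to its total variation.

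Granted the Mosco convergence, I would invoke Attouch's theorem \cite{A3}: Mosco convergence of convex lower semi-continuous potentials is equivalent to graph convergence of their subdifferentials (convergence of the resolvents), and, for subgradient flows in a fixed Hilbert space, it upgrades --- in the presence of convergent initial data $x_{p_n}\to x_{p_0}$ in $L^2(\Omega)$ and convergent forcing $\int_0^T\norm{f_{p_n}(t)-f_{p_0}(t)}^2_{L^2(\Omega)}\,\d t\to 0$ --- to convergence of the solutions in $C([0,T];L^2(\Omega))$. Applied to $\Phi_p$ and to $\Psi_p$ separately, this is precisely the two uniform-in-time limits asserted for $u_{p_n}$ and $v_{p_n}$.

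The hard part is the Mosco convergence at the genuinely singular endpoint $p_0=1$. There the energy space collapses from the reflexive Sobolev spaces $W^{1,p}_0(\Omega)$ to the non-reflexive $BV(\Omega)$, and the correct limit $\Phi_1$ is not the bare total variation but its $L^2$-relaxation, which for Dirichlet data carries the extra boundary term $\int_{\partial\Omega}\abs{u}\,\d\Hscr^{d-1}$ penalizing the trace of $u$. Reproducing this boundary contribution consistently in both Mosco conditions --- forcing the liminf inequality to see it and the recovery sequence to realize it without overshooting --- is the most delicate point, and is exactly where Schuricht's explicit description of $\partial\Phi_1$ \cite{Schu} and the representation of $\partial\Psi_1$ from \cite{ABCM} enter.
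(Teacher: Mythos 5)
Your overall architecture coincides with the paper's: identify $(p_D)$ and $(p_N)$ as subgradient flows of $\Phi_p$, $\Psi_p$ in $L^2(\Omega)$, prove Mosco convergence of the energies along $p_n\to p_0$, and conclude via Attouch's theorem (the paper's Theorem \ref{Moscothm}). Your description of the limit functionals is also the paper's: at $p_0=1$ the Dirichlet energy is $\Phi_1(u)=\norm{\D u}(\R^d)$ for the zero extension, i.e.\ $\norm{\D u}(\Omega)$ plus the trace penalty $\int_{\partial\Omega}\abs{\gamma_1(u)}\,\d\Hscr^{d-1}$, while the Neumann energy $\Psi_1(u)=\norm{\D u}(\Omega)$ carries no boundary term. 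Your recovery-sequence sketch (smooth approximation, dominated convergence in the exponent, diagonalization) likewise matches.

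There are, however, two genuine gaps at exactly the delicate points. First, in the liminf inequality for the Dirichlet case you pass from an $L^1$-gradient bound and weak $L^2$ convergence directly to ``weak lower semicontinuity of the $p_0$-energy (or the total variation) gives the claim.'' This skips the essential step of showing that the weak limit $u$ lies in the effective domain with the correct boundary behaviour: for $p_0>1$ that $u\in W^{1,p_0}_0(\Omega)$ (zero trace), and for $p_0=1$ that the liminf dominates the full $\norm{\D u}(\R^d)$, boundary term included, not merely $\norm{\D u}(\Omega)$. Lower semicontinuity of energies tested inside $\Omega$ cannot see this; moreover, since $p_n\ne p_0$, one cannot literally invoke lsc of the fixed-exponent $p_0$-energy. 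The paper's mechanism is to extend $u_n$ by zero to $\R^d$ (legitimate because $\gamma_{p_n}(u_n)=0$), to run the compactness and duality argument against vector fields in $C_0^\infty(\R^d;\R^d)$ rather than $C_0^\infty(\Omega;\R^d)$ --- which is what makes the boundary contribution visible --- handling the varying exponents by dominated convergence, and for $p_0>1$ to deduce $\gamma_{p_0}(u)=0$ by a limiting argument on traces. Second, your closing claim that the boundary issue is resolved by the explicit representations of $\partial\Phi_1$ (Schuricht) and $\partial\Psi_1$ (Andreu--Ballester--Caselles--Maz\'on) is a misdirection: those representations never enter the paper's proof, which works entirely at the level of the functionals. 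What actually realizes the recovery sequence at $p_0=1$ ``without overshooting'' is the Littig--Parini--Schuricht approximation result (the paper's Lemma \ref{lilem}): any $u\in BV(\Omega)\cap L^2(\Omega)$ admits $u_k\in C_0^\infty(\Omega)$ with $u_k\to u$ in $L^2(\Omega)$ and $\norm{\D u_k}(\R^d)\to\norm{\D u}(\R^d)$. Your ``area-strict approximation'' remark is the right idea, but this existence statement --- strict approximation by \emph{compactly supported} smooth functions realizing the trace penalty exactly --- is a nontrivial lemma that must be invoked or proved; without it, and without the trace argument above, the proof does not close.
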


In other words, if
\[F:[1,+\infty)\times L^2(\Omega)\times L^2_\loc((0,+\infty);L^2(\Omega))\to C([0,+\infty);L^2(\Omega))\]
denotes the map
that assigns the solution $t\mapsto u_p^{x_p,f_p}(t)$ to equation ($p_D$), our result states that
$F$ is continuous. Certainly, a similar statement holds for $(p_N)$.

\section{The related energies}

We observe that, for $p\in (1,+\infty)$, the $p$-Laplace operator on $L^2(\Omega)$ is
a gradient-type operator with the convex potential
\[\Phi_p(u):=\begin{cases}\dfrac{1}{p}\displaystyle\int_\Omega\abs{\nabla u}^p\,\d x,\;&\text{if}\;u\in W^{1,p}_0(\Omega)\cap L^2(\Omega),\\
+\infty,\;&\text{if}\;u\in L^2(\Omega)\setminus W^{1,p}_0(\Omega).\end{cases}\]
Note that the Dirichlet boundary conditions are encoded into the energy space
$W^{1,p}_0(\Omega)$ which is the standard first order $p$-Sobolev space on $\Omega$ such
that its elements have vanishing trace (see next paragraph).

In general, for an element $u\in W^{1,p}(\Omega)$, we define
its \emph{trace} $\gamma_p(u)$ to $\partial\Omega$ by
\begin{equation}\label{Wtracedefi}\begin{split}
&\int_{\partial\Omega}\gamma_p(u)\lrbr{\eta}{\nu}\,\d\Hscr^{d-1}\\
&=\int_\Omega u\div\eta\,\d x
+\int_\Omega\lrbr{\eta}{\nabla u}\,\d x\qquad\forall\eta\in C^1(\ol{\Omega};\R^d),\end{split}
\end{equation}
where $\nu$ is the generalized outward normal on $\partial\Omega$ and $\Hscr^{d-1}$ is the $d-1$-dimensional Hausdorff measure on $\partial\Omega$.
We have that
$\gamma_p:W^{1,p}(\Omega)\to L^p(\partial\Omega,\Hscr^{d-1})$ is a continuous linear operator.
Note that our definition of $W^{1,p}_0(\Omega)$ coincides with the standard one,
where one takes $W^{1,p}_0(\Omega)$ to be the closure of $C_0^\infty(\Omega)$
in $W^{1,p}(\Omega)$, see \cite[\S 5.5, Theorem 2]{Evan}. We denote by
$\partial_i$, $\nabla$ resp. the weak or ordinary partial derivative in direction
of the $i$-th coordinate, the weak or ordinary gradient resp. $\D_i$ and $\D$
refer to the corresponding objects in the sense of Schwartz distributions.
Sometimes we shall write $[Df]$ instead of $Df$ to indicate that $[Df]$ is a
Radon measure.
Of course,
if $f\in L^1_\loc(\Omega)$ and $\D f\in L^1_\loc(\Omega;\R^d)$ then
$\D f=\nabla f$ a.e.

Define also the energy of the $p$-Laplace operator in $L^2(\Omega)$ with Neumann boundary conditions by
\[\Psi_p(u):=\begin{cases}\dfrac{1}{p}\displaystyle\int_\Omega\abs{\nabla u}^p\,\d x,\;&\text{if}\;u\in W^{1,p}(\Omega)\cap L^2(\Omega),\\
+\infty,\;&\text{if}\;u\in L^2(\Omega)\setminus W^{1,p}(\Omega).\end{cases}\]

Let us continue investigating $\Phi_p$.
It is easily seen that $\Phi_p$ is a proper
convex function. It is well-known that $\Phi_p$ is lower semi-continuous, see
e.g. \cite{Butt}.

The corresponding subdifferential $\partial \Phi_p$ is a realization of the $p$-Laplace operator on $\Omega$ with Dirichlet boundary conditions.
Recall that, in general, for a convex, proper, lower semi-continuous function $\Psi:H\to (-\infty,+\infty]$
on a separable Hilbert space $H$, the subdifferential $\partial\Psi$, defined by
$[x,y]\in\partial\Psi$ iff
\[(y,z-x)_H\le\Psi(z)-\Psi(x)\quad\forall z\in H,\]
is a maximal monotone graph.

On smooth functions $\phi\in C_0^\infty(\Omega)$, $\partial \Phi_p$ has the representation
\[(\partial \Phi_p)(\phi)=-\div\left[\abs{\nabla\phi}^{p-1}\sgn(\nabla\phi)\right],\]
where $\sgn$ is the selection of $\Sgn$ with minimal Euclidean norm.

Note that the extension of $\Phi_p$ to $W^{1,p}_0(\Omega)$ is Fr\'echet differentiable in $W^{1,p}_0(\Omega)$-norm, a fact which we, however,
will not use. By the theory of subdifferential operators, $\dom(\partial\Phi_p)$ is
dense in $L^2(\Omega)$, see e.g. \cite[Ch. 2, Proposition 2.6]{Barb2}.

We shall continue investigating the limit case $p=1$.
We need some facts from the space $BV$, which we shall collect here.

\begin{defi}
A function $f\in L^1_\loc(\Omega)$ is said to be of \textup{bounded variation} if
\[\norm{\D f}(\Omega):=\sup\left\{\int_\Omega f\div\eta\,\d x\;\bigg\vert\;\eta\in C_0^\infty(\Omega;\R^d),\;\norm{\eta}_\infty\le 1\right\}<+\infty.\]
The value $\norm{\D f}(\Omega)$ is called the \textup{total variation} of $f$.
The space of all classes of functions in $L^1(\Omega)$ that are of bounded variation is denoted by $BV(\Omega)$.
\end{defi}

Let $f\in BV(\Omega)$. Then there is a Radon measure $\mu_f$ on $\Omega$ and a measurable function $\sigma_f:\Omega\to\R^d$
such that $\abs{\sigma_f}=1$ $\mu_f$-a.e. and
\begin{equation}\label{radoneq}\int_\Omega f\div\eta\,\d x=-\int_\Omega\lrbr{\eta}{\sigma_f}\,\d\mu_f\quad\forall\eta\in C_0^1(\Omega;\R^d).\end{equation}
By \cite[\S 5.1]{EG}, $\norm{\D f}(\Omega)=\mu_f(\Omega)$.
Set $\d[\D f]:=\sigma_f\mu_f(\d x)$, which is a $\R^d$-valued Radon measure on $\Omega$.
Hence \eqref{radoneq} becomes
\begin{equation}\label{radon2eq}
\int_\Omega f\div\eta\,\d x=-\int_\Omega\lrbr{\eta}{\d[\D f]}\quad\forall\eta\in C_0^1(\Omega;\R^d).
\end{equation}
For each $f\in BV(\Omega)$ there is the \textit{trace} $\gamma_1(f)$ to $\partial\Omega$
satisfying the following formula
\begin{equation}\label{tracedefi}\int_\Omega f\div\eta\,\d x=-\int_\Omega\lrbr{\eta}{\d[\D f]}+\int_{\partial\Omega}\gamma_1(f)\lrbr{\eta}{\nu}\,\d\Hscr^{d-1}
\quad\forall\eta\in C^1(\ol{\Omega};\R^d),
\end{equation}
where $\nu$ is the outward normal and $\Hscr^{d-1}$ is the Hausdorff measure on $\partial\Omega$.
We have that $\gamma_1(f)\in L^1(\partial\Omega,\Hscr^{d-1})$.
We refer to \cite[Ch. 5.3]{EG} and \cite[Ch. 3]{AFP} for details.

Also, we have that $W^{1,p}(\Omega)\subset BV(\Omega)$, $p\in [1,+\infty]$ and
\begin{equation}\label{HBV}
\int_\Omega\abs{\nabla u}\,\d x=\norm{\D u}(\Omega)\quad\forall u\in W^{1,p}(\Omega),
\end{equation}
see \cite[Ch. 5.1, Example 1]{EG}.

\begin{rem}\label{Rdrem}
Suppose that $\partial\Omega$ is Lipschitz. Let $u\in BV(\Omega)$ and extend $u$ by zero outside $\Omega$. Then $u\in BV(\R^d)$ and
\[\norm{\D u}(\R^d)=\norm{\D u}(\Omega)+\int_{\partial\Omega}\abs{\gamma_1(u)}\,\d\Hscr^{d-1}\]
where $\gamma_1(u)\in L^1(\partial\Omega,\Hscr^{d-1})$ is the trace of $u$. We refer to \up{\cite[Theorem 3.87]{AFP}}.
\end{rem}

We are ready to define the convex potential associated to the $1$-Laplacian in $L^2(\Omega)$ with
Dirichlet boundary conditions. The Dirichlet problem was derived by Kawohl and Schuricht in \cite{Schu,KaSchu}.

Define
\begin{equation}\label{phi1defi}\Phi_1(u):=\begin{cases}
           \norm{\D u}(\R^d),&\;\;\text{if}\;\; u\in BV(\Omega)\cap L^2(\Omega),\\
+\infty,&\;\;\text{if}\;\; u\in L^2(\Omega)\setminus BV(\Omega).
          \end{cases}\end{equation}
$\Phi_1$ is obviously proper and convex. Lower semi-continuity follows
by standard arguments via integration by parts since $\norm{\D u}(\R^d)$ is lower semi-continuous
with respect to weak convergence in $L^1_\loc(\R^d)$.

The $1$-Laplacian is defined to be the subdifferential $\partial\Phi_1$ of $\Phi_1$.
As above, $\dom(\partial\Phi_1)$ is dense in $L^2(\Omega)$. A precise representation of $\partial\Phi_1$ is given in
\cite[Theorem 2.1]{Schu} and \cite[Theorem 4.23]{KaSchu}.

Define also
\begin{equation}\label{psi1defi}\Psi_1(u):=\begin{cases}
           \norm{\D u}(\Omega),&\;\;\text{if}\;\; u\in BV(\Omega)\cap L^2(\Omega),\\
+\infty,&\;\;\text{if}\;\; u\in L^2(\Omega)\setminus BV(\Omega).
          \end{cases}\end{equation}
$\Psi_1$ is convex, proper and lower semi-continuous, see \cite{ACM}.

\section{Mosco convergence}\label{msecs}

Let $H$ be a separable Hilbert space.
Let $\Phi_n:H\to(-\infty,+\infty]$, $n\in\N$, $\Phi:H\to(-\infty,+\infty]$
be proper, convex, l.s.c. functionals.

Recall the following definition.
\begin{defi}[Mosco convergence]\label{moscodefi}
We say that $\Phi_n\xrightarrow[]{M}\Phi$ in the \emph{Mosco sense} if
\[\tag{M1}
\forall x\in H\;\forall x_n\in H,\; n\in\N,\; x_n\rightharpoonup x\;\text{weakly in $H$}:
           \quad\liminf_n \Phi_n(x_n)\ge\Phi(x).\]
\[\tag{M2}
\forall y\in H\;\exists y_n\in H,\; n\in\N,\; y_n\to y\;\text{strongly in $H$}:
           \quad\limsup_n \Phi_n(y_n)\le\Phi(y).\]
\end{defi}

Consider the following sequence of abstract gradient-type evolution equations in $H$:
\begin{equation}\left\{\begin{aligned}
\frac{\d}{\d t}u_n(t)+\partial\Phi_n(u_n(t))&\ni f_n(t),&&0<t<+\infty,\\
u_n(0)&=x_n,&&\end{aligned}\right.
\end{equation}
where $x_n\in H$ and $f_n\in L^2_\loc((0,+\infty);H)$.

Also,
\begin{equation}\label{sols}\left\{\begin{aligned}
\frac{\d}{\d t}u(t)+\partial\Phi(u(t))&\ni f(t),&&0<t<+\infty,\\
u(0)&=x,&&\end{aligned}\right.
\end{equation}
where $x\in H$ and $f\in L^2_\loc((0,+\infty);H)$.

A solution to, say, equation \eqref{sols} is defined as follows.
Compare with \cite[D\'efinition 3.1]{Brez}.
\begin{defi}\label{soldefi}
We call $u\in C([0,+\infty);H)$ a \emph{solution} to equation \eqref{sols} if
$u$ is locally absolutely continuous,
$u(0)=x$,
$u(t)\in\dom(\partial\Phi)$ for
a.e. $t>0$ and \eqref{sols} holds for a.e. $t>0$ or, equivalently, in
the sense of distributions in $L^2_\loc((0,+\infty);H)$.
\end{defi}

We shall need the following theorem.
\begin{thm}\label{Moscothm}
Suppose that for $T>0$ we have that $x_n\to x$ strongly in $H$, $f_n\to f$ strongly
in $L^2((0,T);H)$.

Then
$\Phi_n\xrightarrow[]{M}\Phi$ implies that
$u_n\to u$ strongly in $H$, uniformly on $[0,T]$.
\end{thm}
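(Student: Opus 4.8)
The plan is to read Theorem~\ref{Moscothm} as Attouch's stability theorem for subgradient evolutions and to prove it in two stages: first translate the Mosco convergence $\Phi_n\xrightarrow[]{M}\Phi$ into a convergence of the maximal monotone operators $\partial\Phi_n\to\partial\Phi$, then deduce from this the uniform convergence of the associated gradient flows. The first stage is Attouch's characterisation \cite{A3}: $\Phi_n\xrightarrow[]{M}\Phi$ holds if and only if for some (equivalently every) $\lambda>0$ the resolvents
\[
J^n_\lambda:=(\Id+\lambda\partial\Phi_n)^{-1}\longrightarrow J_\lambda:=(\Id+\lambda\partial\Phi)^{-1}
\]
strongly and pointwise on $H$, equivalently the graphs converge, i.e.\ every $[a,b]\in\partial\Phi$ is a strong limit of pairs $[a_n,b_n]\in\partial\Phi_n$. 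I would reconstruct this equivalence from the definition: the recovery sequences of (M2) together with the identity $\Phi_n(J^n_\lambda x)\to\Phi(J_\lambda x)$ produce the approximating pairs, while the lower bound (M1) prevents the limiting graph from being too large, so that maximality of $\partial\Phi$ forces equality.

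Next I would collect a priori bounds uniform in $n$. Writing \eqref{sols} in the form $\frac{\d}{\d t}u_n(t)+\xi_n(t)=f_n(t)$ with $\xi_n(t)\in\partial\Phi_n(u_n(t))$ and testing against $\frac{\d}{\d t}u_n$ yields the gradient-flow energy identity
\[
\Bignorm{\tfrac{\d}{\d t}u_n(t)}^2+\tfrac{\d}{\d t}\,\Phi_n(u_n(t))=\bigl(f_n(t),\tfrac{\d}{\d t}u_n(t)\bigr)_H,
\]
whence, after a Young splitting and integration, $\int_0^T\norm{\frac{\d}{\d t}u_n}^2\,\d t$ and $\sup_t\Phi_n(u_n(t))$ are controlled by $\Phi_n(x_n)$ and $\norm{f_n}_{L^2((0,T);H)}$; for data $x_n$ merely in $\ol{\dom\,\partial\Phi_n}$ one inserts the weight $t$ near the origin to absorb the regularising singularity. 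Monotonicity of $\partial\Phi_n$ gives the $L^2$-contraction of the generated semigroup, hence a uniform bound on $\sup_t\norm{u_n(t)}$ and equicontinuity, and $\xi_n=f_n-\frac{\d}{\d t}u_n$ is then bounded in $L^2((0,T);H)$ uniformly in $n$.

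For the passage to the limit I would follow Br\'ezis's semidiscretisation \cite{Brez}. Replace $\partial\Phi_n$ by its Lipschitz Yosida approximation $\partial\Phi_n^\lambda=\lambda^{-1}(\Id-J^n_\lambda)$ and let $u_n^\lambda$ solve the resulting globally Lipschitz ODE with datum $x_n$. Two estimates drive the argument: (i) the a priori bounds give a regularisation error $\sup_{t\in[0,T]}\norm{u_n(t)-u_n^\lambda(t)}\le C\sqrt{\lambda}$ that is \emph{uniform in $n$}; (ii) for each fixed $\lambda$, the pointwise strong convergence $\partial\Phi_n^\lambda\to\partial\Phi^\lambda$ supplied by the resolvent convergence of the first stage, together with a Gronwall estimate for the (uniformly $\lambda^{-1}$-Lipschitz) vector fields and the data convergence $x_n\to x$, $f_n\to f$, gives $u_n^\lambda\to u^\lambda$ in $C([0,T];H)$ as $n\to\infty$, where $u^\lambda$ solves the $\lambda$-regularised limit equation. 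Since $u^\lambda\to u$ as $\lambda\downarrow 0$ by the standard theory for \eqref{sols}, the triangle inequality $\norm{u_n-u}\le\norm{u_n-u_n^\lambda}+\norm{u_n^\lambda-u^\lambda}+\norm{u^\lambda-u}$ with $\lambda$ chosen small and then $n$ large yields $\sup_{t\in[0,T]}\norm{u_n(t)-u(t)}\to 0$, which is the assertion.

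The step I expect to be the main obstacle is the passage to the limit in the nonlinear term $\xi_n\in\partial\Phi_n(u_n)$: here $(\xi_n,u_n)_H$ pairs two factors of which $\xi_n$ converges only weakly, so a naive limit fails. The Yosida route circumvents this by reducing, for each fixed $\lambda$, to Lipschitz ODEs in which the only convergence needed is the \emph{strong} convergence $\partial\Phi_n^\lambda\to\partial\Phi^\lambda$ --- precisely the content of Attouch's equivalence --- while the uniform error estimate (i) controls the regularisation. Equivalently, in a direct weak-compactness argument one would extract weak limits of $u_n$, $\frac{\d}{\d t}u_n$ and $\xi_n$, identify the inclusion $\xi^\ast\in\partial\Phi(u^\ast)$ by a Minty/monotonicity argument tested against the approximating pairs $[a_n,b_n]$, and upgrade $u_n$ to strong convergence by matching the energy identity with the Mosco lower bound (M1); the delicacy there is exactly obtaining that strong convergence, which is what makes the Yosida--diagonal scheme the more robust option.
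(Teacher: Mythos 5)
Your proposal is correct in its architecture, and it is in fact a reconstruction of precisely the results the paper invokes: the paper gives no self-contained proof of Theorem~\ref{Moscothm}, its ``proof'' being the citation of \cite[Theorems 3.66 and 3.74]{A}, where Theorem 3.66 is your first stage (Mosco convergence versus graph/resolvent convergence of the subdifferentials) and Theorem 3.74 is your second stage (resolvent convergence of the operators plus convergence of the data implies uniform convergence of the trajectories, i.e.\ the nonlinear Trotter--Kato/Br\'ezis--Pazy theorem, cf.\ \cite{Brez}). One inaccuracy in stage one, harmless for your purposes: pointwise resolvent convergence alone is \emph{not} equivalent to Mosco convergence --- take $\Phi_n:=\Phi+n$, whose subdifferentials and resolvents coincide with those of $\Phi$ while (M2) fails; the converse implication requires in addition the normalization condition $\Phi_n(a_n)\to\Phi(a)$ along one strongly convergent pair $[a_n,b_n]\in\partial\Phi_n$, $[a,b]\in\partial\Phi$. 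Since you only use the implication from Mosco convergence to resolvent convergence, which is true, nothing breaks there.

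The genuine gap is your estimate (i): $\sup_{t\in[0,T]}\norm{u_n(t)-u_n^\lambda(t)}\le C\sqrt{\lambda}$ uniformly in $n$. For initial data merely in $\ol{\dom\,\partial\Phi_n}$ this fails already for a single fixed operator: when $\Phi(x)=+\infty$ the Yosida error need not admit any rate, and in the application of this theorem made in the paper one does have $\Phi_{p_n}(x_{p_n})=+\infty$ in general, the initial datum being an arbitrary $L^2$ function. The weight-$t$ estimates you mention control $\int_0^T t\norm{\frac{\d}{\d t}u_n(t)}^2\,\d t$ and $t\,\Phi_n(u_n(t))$, but they degenerate exactly at $t=0$, where the uniform bound is needed. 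The standard repair uses the contraction property you already invoke: fix $\eps>0$ and pick $\hat x\in\dom(\partial\Phi)$ with $\norm{\hat x-x}<\eps$ (possible, since the existence of the solution $u$ forces $x\in\ol{\dom(\partial\Phi)}$); set $\hat x_n:=J^n_\mu\hat x$ for a small fixed $\mu>0$. Then $\hat x_n\in\dom(\partial\Phi_n)$, $\hat x_n\to J_\mu\hat x$ strongly, $\norm{J_\mu\hat x-\hat x}\le\mu\abs{(\partial\Phi)^0(\hat x)}$, and $\limsup_n\abs{(\partial\Phi_n)^0(\hat x_n)}\le\mu^{-1}\norm{\hat x-J_\mu\hat x}\le\abs{(\partial\Phi)^0(\hat x)}$, so these data are well prepared and your steps (i)--(ii) apply legitimately, yielding $\sup_{t\in[0,T]}\norm{\hat u_n(t)-\hat u(t)}\to 0$ for the corresponding solutions. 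Since for each fixed $n$ and fixed forcing $f_n$ the solution map is a contraction in the initial datum, $\norm{u_n(t)-\hat u_n(t)}\le\norm{x_n-\hat x_n}$ and $\norm{u(t)-\hat u(t)}\le\norm{x-J_\mu\hat x}$ for all $t$, so letting $n\to\infty$ and then $\mu,\eps\downarrow 0$ closes the triangle inequality. With this insertion your argument is complete and agrees in substance with the proofs behind the paper's citation.
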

\begin{proof}
See \cite[Theorems 3.66 and 3.74]{A}.
\end{proof}
See \cite{A2,A3,AttBeer,CT} for related results.

\section{Proof of Theorem \ref{mainthm}}\label{msec}

Let $p_0\in [1,+\infty)$, $p_n\in (1,+\infty)$, $n\in\N$ such that $\lim_n p_n=p_0$.
Write $q_n:=p_n/(p_n-1)$, $n\in\N$, $q_0:=p_0/(p_0-1)$ (with $1/0:=+\infty$). If we
can prove that
\begin{equation}\label{toprove}
\Phi_n:=\Phi_{p_n}\xrightarrow[n\to +\infty]{M}\Phi_{p_0}=:\Phi,\end{equation}
the assertion of
Theorem \ref{mainthm} follows from Theorem \ref{Moscothm}.

Before we prove \eqref{toprove}, we shall need the following approximation result.
\begin{lem}[Littig--Parini--Schuricht]\label{lilem}
Let $\Omega\subset\R^d$ be open and bounded with Lipschitz boundary $\partial\Omega$ and let $u\in BV(\Omega)\cap L^p(\Omega)$ for some $p\in [1,+\infty)$. Then there is a sequence
$\{u_k\}$ in $C_0^\infty(\Omega)$ such that, for any $\tilde{p}\in [1,p]$,
\begin{equation}
\norm{u_k-u}_{L^{\tilde{p}}(\Omega)}\to 0\quad\text{and}\quad\norm{\D u_k}(\R^d)\to\norm{\D u}(\R^d).
\end{equation}
\end{lem}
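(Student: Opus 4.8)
The plan is to reduce the statement to a \emph{strict} approximation of the zero-extension of $u$ by smooth functions whose support sits strictly inside $\Omega$, and to let the geometry of the Lipschitz boundary account for the extra boundary mass. Extend $u$ by zero to $\ol u\in BV(\R^d)$; by Remark \ref{Rdrem} one has $\norm{\D\ol u}(\R^d)=\norm{\D u}(\Omega)+\int_{\partial\Omega}\abs{\gamma_1(u)}\,\d\Hscr^{d-1}$, which is exactly the quantity $\norm{\D u}(\R^d)$ in the conclusion. Moreover, any $\phi\in C_0^\infty(\Omega)$, extended by zero, is smooth with compact support in $\R^d$, so $\norm{\D\phi}(\R^d)=\int_\Omega\abs{\nabla\phi}\,\d x$. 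Hence it suffices to produce $u_k\in C_0^\infty(\Omega)$ with $u_k\to\ol u$ in $L^{\tilde p}$ for every $\tilde p\in[1,p]$ and $\norm{\D u_k}(\R^d)\to\norm{\D\ol u}(\R^d)$.

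First I would push the support of $\ol u$ strictly into $\Omega$. Since $\partial\Omega$ is Lipschitz, the uniform cone property supplies a smooth, compactly supported vector field $V$ on $\R^d$ that is uniformly transversal and inward-pointing along $\partial\Omega$; its flow $\{\Phi_t\}$ consists of diffeomorphisms with $\Phi_t(\ol\Omega)\subset\subset\Omega$ for all small $t>0$ and $\Phi_t\to\id$ in $C^1$ as $t\to0^+$. Set $u_t:=\ol u\circ\Phi_t^{-1}$, a $BV$ function supported in the compact set $\Phi_t(\ol\Omega)\subset\Omega$. Change of variables and continuity of composition give $u_t\to\ol u$ in $L^{\tilde p}(\R^d)$ for every $\tilde p\in[1,p]$, while the transformation rule for the total variation under the $C^1$-diffeomorphism $\Phi_t$ (whose differential and Jacobian tend to the identity and to $1$) yields $\limsup_{t\to0^+}\norm{\D u_t}(\R^d)\le\norm{\D\ol u}(\R^d)$; the matching bound $\liminf_{t\to0^+}\norm{\D u_t}(\R^d)\ge\norm{\D\ol u}(\R^d)$ follows from lower semi-continuity of the total variation with respect to $L^1_\loc$-convergence. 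Hence $\norm{\D u_t}(\R^d)\to\norm{\D\ol u}(\R^d)$.

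Next I would smooth. For fixed small $t$ the set $\supp u_t$ has positive distance $\delta_t$ from $\partial\Omega$, so mollifying with a kernel $\rho_\eps$ of radius $\eps<\delta_t$ produces $u_{t,\eps}:=u_t*\rho_\eps\in C_0^\infty(\Omega)$. Standard mollification estimates give $u_{t,\eps}\to u_t$ in $L^{\tilde p}$ as $\eps\to0$, the contraction $\int\abs{\nabla u_{t,\eps}}\,\d x=\norm{\D u_{t,\eps}}(\R^d)\le\norm{\D u_t}(\R^d)$, and, again via lower semi-continuity, $\int\abs{\nabla u_{t,\eps}}\,\d x\to\norm{\D u_t}(\R^d)$. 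A diagonal argument in $(t,\eps)$ then extracts a sequence $u_k\in C_0^\infty(\Omega)$ with $u_k\to u$ in every $L^{\tilde p}(\Omega)$, $\tilde p\in[1,p]$, and $\int_\Omega\abs{\nabla u_k}\,\d x\to\norm{\D\ol u}(\R^d)=\norm{\D u}(\R^d)$. (For $\tilde p<p$ one may alternatively interpolate the $L^1$-convergence against a uniform $L^p$-bound, which is preserved along both the flow and the mollification.)

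The main obstacle is the construction in the second paragraph: for a general bounded Lipschitz domain one must produce the inward flow $\Phi_t$ together with quantitative control of the total variation under $\Phi_t$, so that the upper bound (from change of variables) and the lower bound (from lower semi-continuity) close up to the \emph{same} limit. This is precisely where the Lipschitz regularity of $\partial\Omega$ enters—it guarantees a globally defined transversal inward field—and it is also what forces the boundary term $\int_{\partial\Omega}\abs{\gamma_1(u)}\,\d\Hscr^{d-1}$ to reappear: the singular part of $\D\ol u$ carried by $\partial\Omega$ is transported to the interior surface $\Phi_t(\partial\Omega)$ and then spread by the mollifier into a thin transition layer, whose gradient contributes exactly this amount to $\int_\Omega\abs{\nabla u_k}\,\d x$.
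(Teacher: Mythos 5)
First, note what you are being compared against: the paper does not prove Lemma \ref{lilem} at all. It quotes the result, citing Littig--Schuricht \cite[Theorem 3.2]{LiSchu} (see also \cite[Satz 2.31]{Li}) for the Lipschitz case, and Parini \cite[Theorem 2.11]{Par} for the earlier $C^2$ case. So your proposal is a reconstruction of an external theorem. Its overall architecture is the natural (and in essence the standard) one: identify $\norm{\D u}(\R^d)$ with the total variation of the zero extension $\ol u\in BV(\R^d)$ via Remark \ref{Rdrem}, push the mass strictly inside $\Omega$, mollify, get the upper bound from explicit estimates and the lower bound from lower semi-continuity, and diagonalize. Those individual steps (mollification is a contraction for the total variation, the total variation transforms continuously under $C^1$-diffeomorphisms near the identity, interpolation handles $\tilde p<p$) are correct and routine.

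The genuine gap is the one you flag yourself but then wave away: the existence, for a \emph{general} bounded Lipschitz domain, of a single smooth vector field whose flow satisfies $\Phi_t(\ol\Omega)\subset\subset\Omega$ for small $t>0$. This is not ``supplied by the uniform cone property'' without a real argument, and it is exactly the point where the Lipschitz case is harder than the $C^2$ case (where one can simply mollify the inward normal field, or use the distance function). The standard construction --- cover $\partial\Omega$ by finitely many Lipschitz graph charts with cone directions $d_i$ and glue them by a partition of unity --- yields a field $V$ that is transversal in the measure-theoretic sense, $\lrbr{V}{\nu}\ge c>0$ $\Hscr^{d-1}$-a.e., but that is strictly weaker than what your flow argument needs: you need $V$ to be a \emph{pointwise} good direction at every boundary point, and the set of good directions at a boundary point of a Lipschitz domain is in general not convex (at the apex of the reentrant wedge $\{x_2>-L\abs{x_1}\}$ the good cone is $\{v: v_2>-L\abs{v_1}\}$), so a convex combination of good chart directions is not automatically good. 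The hole can be closed --- for instance, each $d_i$ is a good direction uniformly on a whole chart neighborhood, so a displacement $t\sum_i\lambda_i d_i$ can be realized as successive small translations $t\lambda_1 d_1$, $t\lambda_2 d_2,\dots$, each of which stays in $\Omega$; alternatively one avoids the global field altogether by localizing $u$ itself as $u=\sum_i\theta_i u$, translating each piece in its own chart direction and then mollifying, which is how the cited proof of Littig--Schuricht and the classical Gagliardo/Giusti-type arguments proceed. As written, however, the crucial claim is asserted rather than proved, and it is precisely the nontrivial content of the lemma; everything you do prove around it is standard.
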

The result was proved by
Littig and Schuricht in \cite[Theorem 3.2]{LiSchu}, see also \cite[Satz 2.31]{Li}.
Previously, if $\partial\Omega$ is $C^2$, the result was proved by
Parini, see \cite[Theorem 2.11]{Par} and also \cite{Par2}.

\begin{proof}[Proof of \eqref{toprove}]
We start with proving (M1) from Definition \ref{moscodefi}. Suppose that $u_n\in L^{2}(\Omega)$ with
\[\liminf_n \Phi_{n}(u_n)<+\infty.\]
Extract
a subsequence (also denoted by $\{u_n\}$) such that
\[\lim_n\Phi_n(u_n)=\liminf_n \Phi_n(u_n)\]
and
\[C:=\sup_n\Phi_n(u_n)<+\infty.\]
Since $u_n\in W^{1,p_n}_0(\Omega)$ and hence $\gamma_{p_n}(u_n)=0$,
we can extend $u_n$ to $\R^d$ by zero outside $\Omega$ (denoted also by $u_n$) and get that
$u_n\in W^{1,p_n}(\R^d)$ (cf. \cite[Exercise 15.26]{Leo}).

Fix a ball $B\subset\R^d$. Suppose first that for a subsequence we have that $p_n\le 2$. Then by H\"older inequality,
\begin{align*}\int_B\abs{u_n}\,\d x&\le\left(\int_B\abs{u_n}^{p_n}\,\d x\right)^{1/p_n}(\vol B)^{1/q_n}\\
&\le\left(\vol{B}+\int_B\abs{u_n}^2\,\d x\right)^{1/{p_n}}(\vol B)^{1/q_n}\\
&\le\left(1+\vol{B}+\int_B\abs{u_n}^2\,\d x\right)(1+\vol B)
\end{align*}
and
\begin{equation}\label{Dieq}\begin{split}\int_B\abs{\nabla u_n}\,\d x&\le\left(p_n \Phi_n(u_n)\right)^{1/p_n}(\vol B)^{1/q_n}\\
&\le(1+C\sup_n p_n)(1+\vol B)
\end{split}\end{equation}
Suppose for a while that
\begin{equation}\label{pnsuppn}\sup_n\norm{u_n}_{L^{2}(\R^d)}<+\infty.\end{equation}
Hence $\{u_n\}$ is bounded in $W^{1,1}(B)$.
If for a subsequence $p_n>2$, the $W^{1,1}$-bound can be established with the
help of Jensen's inequality.

By compactness of the embedding $W^{1,1}(B)\subset L^1(B)$,
a subsequence of $\{u_n\}$ converges in $L^1(B)$ (see e.g. \cite[\S 1.4.6, Lemma]{Maz} or \cite[Theorem 6.2]{Ada}).
By a diagonal argument, we can extract a
subsequence such that $\{u_n\}$ converges strongly to some $f\in L^1_\loc(\R^d)$. W.l.o.g.
$u_n\to f$ $\d x$-a.e., by extracting another subsequence, if necessary. Also, the measures
$\{\partial_i u_n\,\d x\}$ are vaguely bounded and hence vaguely relatively compact, see \cite[Paragraphs 46.1, 46.2]{B}.
For each $1\le i\le d$,
we can extract a subsequence, such that $\{\partial_i u_n\,\d x\}$ converges to some locally finite Radon
measure $m_i$ on $\R^d$. By vague convergence
and integration by parts,
\[\int\phi\,\d m_i=\lim_n\int\phi\partial_i u_n\,\d x=-\lim_n\int\partial_i\phi u_n\,\d x=-\int\partial_i \phi f\,\d x,\]
for every $\phi\in C_0^\infty(\R^d)$ and every $1\le i\le d$. Hence $m_i=\D_i f$.
Furthermore, for every $\phi\in C_0^\infty(\R^d;\R^d)$,
\begin{equation}\label{tr1}\begin{split}&\frac{1}{p_n}\lrabs{\int u_n\div \phi\,\d x}^{p_n}=\frac{1}{p_n}\lrabs{\int\lrbr{\nabla u_n}{\phi}\,\d x}^{p_n}\\
\le&\Phi_n(u_n)\times\left\{\begin{aligned}&\norm{\phi}_\infty\left(\vol(\supp \phi)\right)^{p_n/q_n},\;&&\text{if}\,\;p_0=1,\\
&\left(\int_\Omega\abs{\phi}^{q_n}\,\d x\right)^{p_n/q_n},\;&&\text{if}\,\;p_0>1.\end{aligned}
\right.\end{split}\end{equation}
Upon taking the limit, by Lebesgue's dominated convergence theorem (since
either for a subsequence $\abs{\cdot}^{q_n}\le 1_\Omega+\abs{\cdot}^2$ or for a subsequence $\abs{\cdot}^{q_n}\le 1_\Omega+\abs{\cdot}^{\sup_n q_n}$)
\begin{equation}\label{tr2}\begin{split}&\frac{1}{p_0}\lrabs{\int f\div \phi\,\d x}^{p_0}\\
\le& \liminf_n \Phi_n(u_n)\times\left\{ \begin{aligned}&\norm{\phi}_\infty,\;&&\text{if}\;p_0=1,\\
&\left(\int\abs{\phi}^{q_0}\,\d x\right)^{p_0/q_0},\;&&\text{if}\,\;p_0>1.\end{aligned}\right.\end{split}\end{equation}
Taking the supremum over all $\phi$ with $\norm{\phi}_\infty\le 1$ (if $p_0=1$) or
with $\norm{\phi}_{L^{q_0}(\R^d)}\le 1$ (if $p_0>1$) yields
\[\left.\begin{aligned}&\norm{\D f}(\R^d)&&\;\text{if}\;p_0=1,\\
&\frac{1}{p_0}\int\abs{\D f}^{p_0}\,\d x&&\;\text{if}\;p_0>1\end{aligned}\right\}\le\liminf_n \Phi_n(u_n).\]
Suppose now that $u_n\rightharpoonup u$ weakly in $L^2(\Omega)$.
This justifies \eqref{pnsuppn}.
Clearly, for all $\phi\in C_0(\R^d)$,
\[\int u\phi\,\d x=\lim_n\int u_n\phi\,\d x=\int f\phi\,\d x,\]
hence $u=f$ $\d x$-a.e. and $\D u=\D f$ in the sense of distributions. We are left to prove that, if $p_0=1$, then $u\in BV(\Omega)$, because then
\[\norm{\D u}(\R^d)=\Phi(u)<+\infty.\]
Simiarly, if $p_0>1$ and if $u\in W^{1,p_0}_0(\Omega)$,
\[\Phi(u)=\frac{1}{p_0}\int_\Omega\abs{\nabla u}^{p_0}\,\d x\le\frac{1}{p_0}\int_{\R^d}\abs{\D u}^{p_0}\,\d x<+\infty.\]

We have that $u_n\to u$ in $L^1_\loc(\R^d)$ and that $\partial_i u_n\,\d x\to\D_i u$ in the vague sense on $\R^d$.
Hence by the definition of the trace \eqref{Wtracedefi}, \eqref{tracedefi},
\begin{multline*}0=\lim_n\int_{\partial\Omega}\gamma_{p_n}(u_n)\lrbr{\phi}{\nu}\,\d\Hscr^{d-1}=\int_{\partial\Omega}\gamma_{p_0}(u)\lrbr{\phi}{\nu}\,\d\Hscr^{d-1}\\
\forall\phi\in C_0^\infty(\R^d;\R^d).\end{multline*}
Let $\phi\in C_0^\infty(\Omega;\R^d)$. By \eqref{tracedefi},
\[\int_\Omega u\div\phi\,\d x=-\int_\Omega\lrbr{\phi}{\d [\D u]}.\]
Hence by definition, $u\in BV(\Omega)$.

On the other hand, if $p_0>1$, we can take the
the supremum over all $\phi\in C_0^\infty(\R^d;\R^d)$ such that $\norm{\phi}_{L^{q_0}(\Omega)}\le 1$ to get that
\[\int_{\partial\Omega}\abs{\gamma_{p_0}(u)}^{p_0}\,\d\Hscr^{d-1}\le\liminf_n\int_{\partial\Omega}\abs{\gamma_{p_n}(u_n)}^{p_n}\,\d\Hscr^{d-1}=0.\]
Compare with \cite[Lemma 3.90]{AFP} and \eqref{tr2}.
We get that $\gamma_{p_0}(u)=0$ $\Hscr^{d-1}$-a.e. and hence $u\in W^{1,p_0}_0(\Omega)$.
Since we can repeat the steps for any subsequence of $\{u_n\}$, we have proved (M1).

Let us prove (M2) from Definition \ref{moscodefi}. Let $p_0=1$ and $u\in BV(\Omega)$. Then by Lemma \ref{lilem} there is a sequence $\{u_m\}\subset C^\infty_0(\Omega)\subset BV(\Omega)$
with
\[u_m\to u\;\text{in}\;L^2(\Omega)\;\text{and}\;\norm{\D u_m}(\R^d)\to\norm{\D u}(\R^d)\;\;\text{as}\;m\to+\infty.\]

But by Lebesgue's dominated convergence theorem (since
for large $n$ it holds that $\frac{1}{p_n}\abs{\cdot}^{p_n}\le 1_\Omega+\abs{\cdot}^2$) and \eqref{HBV} for each $m\in\N$
\[\frac{1}{p_n}\int_\Omega\abs{\nabla u_m}^{p_n}\,\d x\to\int_\Omega\abs{\nabla u_m}\,\d x=\norm{\D u_m}(\R^d)\;\;\text{as}\;n\to+\infty.\]
An application of Lemma \ref{wunderlemma} shows that there exists a sequence $\{m_n\}$, $m_n\uparrow+\infty$, such that
\[\lim_n \Phi_n(u_{m_n})=\lim_n\frac{1}{p_n}\int_\Omega\abs{\nabla u_{m_n}}^{p_n}\,\d x=\norm{\D u}(\R^d)=\Phi(u),\]
which proves (M2). If $p_0>1$, we can repeat the above steps in the obvious manner.
\end{proof}

Let us briefly treat the case of Neumann boundary conditions. The idea of proof is essentially the
same with slightly different methods. Therefore, let
$p_0\in [1,+\infty)$, $p_n\in (1,+\infty)$, $n\in\N$ such that $\lim_n p_n=p_0$.
Write $q_n:=p_n/(p_n-1)$, $n\in\N$, $q_0:=p_0/(p_0-1)$ (with $1/0:=+\infty$).
In order to apply Theorem \ref{Moscothm},
we would like to prove that
\begin{equation}\label{toprove2}
\Psi_n:=\Psi_{p_n}\xrightarrow[n\to+\infty]{M}\Psi_{p_0}=:\Psi.\end{equation}

We need the following approximation result.
\begin{lem}\label{aalem}
Let $u\in BV(\Omega)\cap L^2(\Omega)$. Then there exists a sequence $\{u_k\}\subset C^\infty(\ol{\Omega})\cap BV(\Omega)$ such that
\[\norm{u_k-u}_{L^2(\Omega)}\to 0\quad\text{and}\quad\norm{\D u_k}(\Omega)\to\norm{\D u}(\Omega).\]
\end{lem}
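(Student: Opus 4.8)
The plan is to build the approximants by the classical \emph{localize--translate--mollify} scheme, using the Lipschitz structure of $\partial\Omega$ to gain smoothness up to the boundary while capturing exactly the interior total variation $\norm{\D u}(\Omega)$ (and not a boundary-trace contribution, as would be the case for the Dirichlet Lemma \ref{lilem}). Since $\partial\Omega$ is Lipschitz and compact, I would fix a finite open cover $U_0,U_1,\dots,U_N$ of $\ol{\Omega}$ with $U_0\Subset\Omega$ and with each $U_j$ ($j\ge1$) a boundary chart in which $\Omega\cap U_j$ lies on one side of a Lipschitz graph; the Lipschitz condition supplies unit vectors $\xi_j$ and a cone aperture such that $x+t\xi_j\in\Omega$ for all $x\in\ol{\Omega}\cap U_j$ and all small $t>0$. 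Let $\{\phi_j\}_{j=0}^N$ be a partition of unity subordinate to this cover, so that $\sum_j\phi_j\equiv1$ on $\ol{\Omega}$ and $\sum_j\nabla\phi_j\equiv0$. For a parameter $\tau>0$ and a mollification radius $\eps=\eps(\tau)\downarrow0$ chosen much smaller than $\tau$ times the cone aperture, set
\[
u_\tau:=\sum_{j=0}^N T_j^\tau(u\phi_j),\qquad T_j^\tau g:=\big[g(\cdot+\tau\xi_j)\big]*\rho_{\eps},\quad\xi_0:=0.
\]
The inward shift $\tau\xi_j$ guarantees that, for $\eps$ small, the mollification at any point of $\ol{\Omega}$ samples $u$ only at points lying inside $\Omega$; hence each summand, and therefore $u_\tau$, restricts to a function in $C^\infty(\ol{\Omega})\cap BV(\Omega)$. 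Taking $\tau=1/k$ produces the desired sequence $\{u_k\}$.

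For the $L^2$-convergence I would use that the cover is finite, so $u_\tau$ is a finite sum of images of $u\phi_j\in L^2(\Omega)$ (extended by zero) under the operators $T_j^\tau$. Each $T_j^\tau$ is a contraction on $L^2$ converging strongly to the identity as $\tau\to0$, by continuity of translation and of mollification in $L^2$; hence $u_\tau\to\sum_j u\phi_j=u$ in $L^2(\Omega)$, and in particular $u_\tau\to u$ in $L^1(\Omega)$.

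The heart of the argument is the strict convergence of the total variations, where the cancellation $\sum_j\nabla\phi_j=0$ is essential. Writing, as measures, $[\D(u\phi_j)]=\phi_j\,[\D u]+u\,\nabla\phi_j\,\d x$ and applying $T_j^\tau$, one gets
\[
\nabla u_\tau=\sum_{j=0}^N T_j^\tau\big(\phi_j\,[\D u]\big)+\sum_{j=0}^N T_j^\tau\big(u\,\nabla\phi_j\,\d x\big).
\]
Since each $T_j^\tau\to\id$ in $L^1$ on the fixed $L^1$-functions $u\,\nabla\phi_j$, the second sum tends to $\sum_j u\,\nabla\phi_j=0$ in $L^1(\Omega)$. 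For the first sum, the pointwise bound $\abs{T_j^\tau(\phi_j[\D u])}\le T_j^\tau(\phi_j\,\norm{\D u})$ together with $0\le\phi_j\le1$ and $\sum_j\phi_j=1$ yields, using that the inward translation captures the shifted masses inside $\Omega$ in the limit,
\[
\limsup_\tau\int_\Omega\abs{\nabla u_\tau}\,\d x\le\sum_{j=0}^N\int_\Omega\phi_j\,\d\norm{\D u}=\norm{\D u}(\Omega).
\]
On the other hand, lower semicontinuity of the total variation under $L^1$-convergence gives $\liminf_\tau\norm{\D u_\tau}(\Omega)\ge\norm{\D u}(\Omega)$. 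Combining the two bounds yields $\norm{\D u_\tau}(\Omega)\to\norm{\D u}(\Omega)$, completing the proof.

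I expect the main obstacle to be the boundary bookkeeping: coupling $\eps=\eps(\tau)$ to the charts so that the inward-translated mollifications are simultaneously (i) genuinely smooth up to $\ol{\Omega}$, (ii) sampling $u$ only inside $\Omega$, and (iii) free of spurious total variation near $\partial\Omega$ in the $\limsup$ estimate. The cancellation $\sum_j\nabla\phi_j=0$ is precisely what prevents the cut-off derivatives from contributing extra mass; without it the partition of unity would inflate the total variation and destroy strictness.
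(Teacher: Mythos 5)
Your proposal is correct, but it takes a genuinely different route from the paper. The paper disposes of this lemma in two lines: it invokes the known $L^1$-theory, namely the strict approximation theorem \cite[Theorem 10.1.2]{ABM} giving $u_k$ smooth with $\norm{u_k-u}_{L^1(\Omega)}\to 0$ and $\norm{\D u_k}(\Omega)\to\norm{\D u}(\Omega)$, and then upgrades $L^1$- to $L^2$-convergence by observing that those approximants are mollifications of $u\in L^2(\Omega)$, so Young's convolution inequality (the ``Hausdorff--Young'' remark) makes them $L^2$-contractions converging to $u$ in $L^2$. You instead reprove the approximation theorem itself by the localize--translate--mollify scheme in Lipschitz boundary charts, with the cancellation $\sum_j\nabla\phi_j=0$ and the upper-bound-plus-lower-semicontinuity sandwich; this is the standard constructive argument, and your steps are sound --- the issues you flag (coupling $\eps(\tau)$ to the cone aperture so that the translated mollifications sample only inside $\Omega$ and do not pick up the jump of the zero extension across $\partial\Omega$, which is the term missing from your displayed decomposition of $\nabla u_\tau$ and is killed precisely by the inward shift) are exactly the technical content, and they do work out on a Lipschitz domain. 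What your route buys: it is self-contained, and it genuinely produces functions smooth up to $\ol{\Omega}$, since your $u_\tau$ are restrictions of functions in $C^\infty(\R^d)$. This is not a cosmetic point: the cited theorem, in its usual formulation for arbitrary open sets, yields approximants only in $C^\infty(\Omega)\cap W^{1,1}(\Omega)$, whereas the lemma claims $C^\infty(\ol{\Omega})$ and the paper's application (the proof of (M2) for the Neumann energies $\Psi_{p_n}$) needs $\nabla u_k$ bounded on $\Omega$, i.e. $u_k\in W^{1,p}(\Omega)$ for every $p$ with a dominated-convergence bound; your inward-translation step is the ingredient that makes this boundary smoothness honest. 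What the paper's route buys is brevity, delegating the construction to the literature; its $L^2$-upgrade is the same observation as yours, that translation and mollification are $L^2$-contractions converging strongly to the identity.
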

\begin{proof}
With $L^2$ replaced by $L^1$ this is well-known, see e.g. \cite[Theorem 10.1.2]{ABM}. Since the approximation is obtained by a mollifier,
we get the strong convergence in $L^2$ by Hausdorff-Young inequality.
\end{proof}

\begin{proof}[Proof of \eqref{toprove2}]
We start with noting that by the Lipschitz assumption for $\partial\Omega$,
and the fact that $W^{1,p}(\Omega)\subset BV(\Omega)$, $p\in (1,+\infty)$, we can
set any function $u\in W^{1,p}(\Omega)$ a.e. zero outside $\Omega$ (denoted also by $u$) and
obtain a function $u\in BV(\R^d)$, see \cite[\S 5.4, Theorem 1]{EG}.

Let us
prove (M1) from Definition \ref{moscodefi}. Suppose that $u_n\in L^{2}(\Omega)$ with
\[\liminf_n \Psi_{n}(u_n)<+\infty.\]
Suppose also that $u_n\rightharpoonup u$ weakly in $L^2(\Omega)$ for some $u$.
Extract
a subsequence (also denoted by $\{u_n\}$) such that
\[\lim_n\Psi_n(u_n)=\liminf_n \Psi_n(u_n)\]
and
\[C:=\sup_n\Psi_n(u_n)<+\infty.\]
Hence $u_n\in W^{1,p_n}(\Omega)$.
Extend $u_n$ by zero outside $\Omega$, denoted also by $u_n$. Note that $u_n\in L^2(\R^d)$ and
$u_n\in BV(\R^d)$, but possibly $u_n\not\in W^{1,p_n}(\R^d)$.

As above we can extract a subsequence of $\{u_n\}$ and some $f\in BV(\Omega)$
such that $u_n\to f$ in $L^1_\loc(\Omega)$ and the distributional gradient of
$u_n$ converges to some locally finite Radon measure $m$ which is then
found to be equal to $[Df]$. In fact, $u_n\to f$
in $L^1(\Omega)$ by \cite[Theorem 5.2.4]{EG}.

The rest of the proof works exactly as above except that the test-functions in
$C_0^\infty(\R^d;\R^d)$ are replaced by test-functions in $C_0^\infty(\Omega;\R^d)$ and in \eqref{tr2} we take the supremum
over all $\phi$ such that $\norm{\phi}_{L^{q_0}(\Omega)}\le 1$ instead.

The proof of (M2) from Definition \ref{moscodefi} can easily be completed by
the arguments of the above proof combined with Lemma \ref{aalem} resp. the well-known
approximation of $W^{1,p}(\Omega)$-functions by $C^\infty(\ol{\Omega})\cap W^{1,p}(\Omega)$-functions, see e.g.
\cite[\S 4.2, Theorem 3]{EG}.
\end{proof}

\section*{Acknowledgements}

The author would like to thank Diego Pallara for his help on approximation
of bounded variation functions during the conference on evolution equations in Schmitten 2010.

The author would like to thank Samuel Littig and Friedemann Schuricht for a
pleasant stay in Dresden that has given the opportunity for several fruitful discussions leading to the
present version of the paper.

The research was funded by the SFB 701 ``Spectral Structures and Topological Methods in Mathematics'', Bielefeld, and by the Forschergruppe 718 ``Analysis and Stochastics in Complex Physical Systems'', Berlin--Leipzig

\appendix
\section{A diagonal lemma}

\begin{lem}\label{wunderlemma}
Let $\{a_{n,m}\}_{n,m\in\N}\subset\ol{\R}$ be a doubly indexed sequence of extended real numbers.
Then there exists a map $n\mapsto m(n)$ with $m(n)\uparrow +\infty$ as $n\to+\infty$ such that
\begin{equation}\label{att1}
\liminf_{n\to+\infty} a_{n,m(n)}\ge\liminf_{m\to+\infty}\left[\liminf_{n\to+\infty} a_{n,m}\right],
\end{equation}
or, equivalently
\begin{equation}\label{att2}
\limsup_{n\to+\infty} a_{n,m(n)}\le\limsup_{m\to+\infty}\left[\limsup_{n\to+\infty} a_{n,m}\right].
\end{equation}
\end{lem}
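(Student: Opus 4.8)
The plan is to prove the $\liminf$ inequality \eqref{att1}; the $\limsup$ version \eqref{att2} then follows immediately by applying \eqref{att1} to the doubly indexed sequence $\{-a_{n,m}\}$ and invoking $\liminf(-x)=-\limsup x$ together with $\limsup(-x)=-\liminf x$. So I fix the abbreviations $b_m:=\liminf_{n\to+\infty}a_{n,m}\in\ol{\R}$ and $L:=\liminf_{m\to+\infty}b_m\in\ol{\R}$, and the goal becomes: construct a nondecreasing map $n\mapsto m(n)$ with $m(n)\to+\infty$ and $\liminf_{n\to+\infty}a_{n,m(n)}\ge L$.

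First I would dispose of the degenerate case $L=-\infty$, in which \eqref{att1} holds for any admissible choice (for instance $m(n)=n$), since a $\liminf$ of extended reals always dominates $-\infty$. So I assume $L>-\infty$ and fix an increasing sequence of \emph{real} numbers $c_k\uparrow L$ with $c_k<L$ for all $k$; concretely $c_k:=L-1/k$ when $L\in\R$, and $c_k:=k$ when $L=+\infty$. In either case $\sup_k c_k=L$.

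The heart of the argument is a two-layer selection. Since $L=\liminf_m b_m>c_k$, the inequality $b_m>c_k$ holds for all sufficiently large $m$, so I can pick indices $M_1<M_2<\cdots$ with $b_{M_k}>c_k$ for every $k$. For each such $M_k$ the definition $b_{M_k}=\liminf_{n\to+\infty}a_{n,M_k}>c_k$ produces an integer $N_k$ — which I take strictly increasing in $k$ — such that $a_{n,M_k}>c_k$ for all $n\ge N_k$. I then define $m(n):=M_k$ on each block $N_k\le n<N_{k+1}$, and $m(n):=M_1$ for $n<N_1$. By construction $m$ is nondecreasing and $m(n)=M_k\to+\infty$, so it is of the required form.

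Finally I would read off the estimate. If $n\ge N_k$, then $n$ lies in some block with index $k'\ge k$, whence $a_{n,m(n)}=a_{n,M_{k'}}>c_{k'}\ge c_k$; consequently $\inf_{n\ge N_k}a_{n,m(n)}\ge c_k$ and therefore $\liminf_{n\to+\infty}a_{n,m(n)}\ge c_k$ for every $k$. Letting $k\to+\infty$ gives $\liminf_{n\to+\infty}a_{n,m(n)}\ge\sup_k c_k=L$, which is exactly \eqref{att1}. The only points demanding attention are the extended-real bookkeeping (the cases $L=\pm\infty$) and the requirement that both $M_k$ and $N_k$ be chosen strictly increasing so that the diagonal index genuinely tends to $+\infty$; I expect neither to be a real obstacle, so the whole lemma should be routine.
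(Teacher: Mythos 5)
Your proof is correct. For calibration: the paper does not actually prove this lemma at all; its ``proof'' is the single line ``See \cite[Appendix]{AWets} or \cite[Lemma 1.15 et sqq.]{A}''. Your argument --- fix $c_k\uparrow L:=\liminf_m\liminf_n a_{n,m}$ with $c_k<L$, choose $M_k\uparrow+\infty$ with $\liminf_n a_{n,M_k}>c_k$, choose strictly increasing thresholds $N_k$ with $a_{n,M_k}>c_k$ for all $n\ge N_k$, and set $m(n):=M_k$ on the block $N_k\le n<N_{k+1}$ --- is precisely the standard diagonalization construction underlying those references, and the details you flag (the extended-real cases $L=\pm\infty$, the strict monotonicity of $M_k$ and $N_k$, and the final estimate $a_{n,m(n)}>c_{k'}\ge c_k$ for $n\ge N_k$) all check out. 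The derivation of \eqref{att2} from \eqref{att1} by passing to $\{-a_{n,m}\}$ is also fine.

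One caveat concerns the phrase ``or, equivalently''. You read it (reasonably) as asserting that the two existence statements are interchangeable under $a\mapsto-a$, so your argument produces, in general, one map for \eqref{att1} and a possibly different map for \eqref{att2}. But note how the lemma is invoked in Section 4: it is used to produce a single sequence $m_n\uparrow+\infty$ with $\lim_n\Phi_n(u_{m_n})=\Phi(u)$, a two-sided conclusion, which requires \eqref{att1} and \eqref{att2} to hold for the \emph{same} map (or else an additional appeal to the already-proved (M1)). Your construction yields this stronger simultaneous statement at no extra cost: writing $L_1:=\liminf_m\liminf_n a_{n,m}\le L_2:=\limsup_m\limsup_n a_{n,m}$, fix also $d_k\downarrow L_2$ with $d_k>L_2$, choose $M_k$ so large that both $\liminf_n a_{n,M_k}>c_k$ and $\limsup_n a_{n,M_k}<d_k$ (each condition holds for all sufficiently large $m$), and then choose $N_k$ so that $c_k<a_{n,M_k}<d_k$ for all $n\ge N_k$; the same block definition of $m(n)$ then gives $\liminf_n a_{n,m(n)}\ge L_1$ and $\limsup_n a_{n,m(n)}\le L_2$ simultaneously. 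Adding that one observation would make your proof cover every reading of the statement and match the way the lemma is actually used in the paper.
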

\begin{proof}
See \cite[Appendix]{AWets} or \cite[Lemma 1.15 et sqq.]{A}.
\end{proof}


\begin{thebibliography}{10}

\bibitem{Ada}
R.~A. Adams, \emph{{S}obolev spaces}, Academic Press, New York--San
  Francisco--London, 1975.

\bibitem{AFP}
L.~Ambrosio, N.~Fusco, and D.~Pallara, \emph{{F}unctions of bounded variation
  and free discontinuity problems}, Clarendon Press, Oxford University Press,
  2000.

\bibitem{ABCM}
F.~Andreu, C.~Ballester, V.~Caselles, and J.~M. Maz{\'o}{}n, \emph{{T}he
  {D}irichlet problem for the total variation flow}, J. Funct. Anal.
  \textbf{180} (2001), no.~2, 347--403.

\bibitem{ACM}
F.~Andreu-Vaillo, V.~Caselles, and J.~M. Maz{\'o}{}n, \emph{{P}arabolic
  quasilinear equations minimizing linear growth functionals}, Birkh\"auser,
  Basel, 2003.

\bibitem{A2}
H.~Attouch, \emph{{C}onvergence de fonctions convexes, des sous diff\'erentiels
  et semigroupes associ\'es}, C. R. Acad. Sci. Paris, S\'er. A, \textbf{284}
  (1977), no.~10, 539--542.

\bibitem{A3}
H.~Attouch, \emph{{C}onvergence de fonctionnelles convexes}, Proc. Journ\'ees
  d'Anal. non lin\'eaire, Besan\c{c}on (1977), Lecture notes in mathematics,
  vol. 665, Springer-Verlag, Berlin--Heidelberg--New York, 1978, pp.~1--40.

\bibitem{A}
H.~Attouch, \emph{{V}ariational convergence for functions and operators}, Pitman,
  Boston--London--Melbourne, 1984.

\bibitem{A4}
H.~Attouch, \emph{{V}iscosity solutions of minimization problems}, SIAM J. Optim.
  \textbf{6} (1996), no.~3, 769--806.

\bibitem{AttBeer}
H.~Attouch and G.~Beer, \emph{{O}n the convergence of subdifferentials of
  convex functions}, Arch. Math. (Basel) \textbf{60} (1993), no.~4, 389--400.

\bibitem{ABM}
H.~Attouch, G.~Buttazzo, and G.~Michaille, \emph{{V}ariational analysis in
  {S}obolev and {B}{V} spaces: applications to {P}{D}{E}s and optimization},
  MPS-SIAM series on optimization, vol.~6, SIAM and MPS, Philadelphia, 2006.

\bibitem{AttCom}
H.~Attouch and R.~Cominetti, \emph{{$L^p$} approximation of variational
  problems in {$L^1$} and {$L^\infty$}}, Nonlinear Anal., Ser. A: Theory
  Methods \textbf{36} (1999), no.~3, 373--399.

\bibitem{AWets}
H.~Attouch and R.~J.-B. Wets, \emph{{A} convergence theory for saddle
  functions}, Trans. Amer. Math. Soc. \textbf{280} (1983), no.~1, 1--41.

\bibitem{AuKo}
G.~Aubert and P.~Kornprobst, \emph{{M}athematical problems in image processing,
  partial differential equations and the calculus of variations}, 2nd ed.,
  Applied mathematical sciences, vol. 147, Springer, Berlin--Heidelberg--New
  York, 2006.

\bibitem{Barb2}
V.~Barbu, \emph{{A}nalysis and control of nonlinear infinite dimensional
  systems}, Mathematics in science and engineering, vol. 190, Academic Press,
  Inc., 1993.

\bibitem{BDPR}
V.~Barbu, G.~Da~Prato, and M.~R\"ockner, \emph{{S}tochastic nonlinear diffusion
  equations with singular diffusivity}, SIAM J. Math. Anal. \textbf{41} (2009),
  no.~3, 1106--1120.

\bibitem{B}
H.~Bauer, \emph{{W}ahrscheinlichkeitstheorie und {G}rundz\"uge der
  {M}a\ss{}theorie}, 2. ed., de Gruyter, Berlin--New York, 1974.

\bibitem{Brez}
H.~Br{}{\'e}zis, \emph{{O}p\'erateurs maximaux monotones et semi-groupes de
  contractions dans les espaces de {H}ilbert}, North-Holland Publ. Co., 1973.

\bibitem{Butt}
G.~Buttazzo, \emph{{S}emicontinuity, relaxation and integral representation in
  the calculus of variations}, Pitman Research Notes in Mathematics Series,
  vol. 207, Longman Scientific \& Technical, Harlow; copublished in the United
  States with John Wiley \& Sons, Inc., New York, 1989.

\bibitem{CioToe1}
I.~Ciotir and J.~M. T{}{\"o}lle, \emph{{C}onvergence of solutions to the
  stochastic $p$-{L}aplace equation as $p$ goes to $1$}, Preprint, submitted
  (2010), 16 pp., BiBoS-Preprint 11-01-371,
  \url{http://www.math.uni-bielefeld.de/~bibos/preprints/11-01-371.pdf}.

\bibitem{CT}
C.~Combari and L.~Thibault, \emph{{O}n the graph convergence of
  subdifferentials of convex functions}, Proc. Amer. Math. Soc. \textbf{126}
  (1998), no.~8, 2231--2240.

\bibitem{DiBe}
E.~Di~Benedetto, \emph{{D}egenerate parabolic equations}, Universitext,
  Springer, Berlin--Heidelberg--New York, 1993.

\bibitem{Diaz}
J.~I. D{}{\'i}az, \emph{{N}onlinear partial differential equations and free
  boundaries: {E}lliptic equations}, Research Notes in Mathematics, vol. 106,
  Pitman Advanced Publ. Program, Boston, 1985.

\bibitem{Evan}
L.~C. Evans, \emph{{P}artial differential equations}, Graduate Studies in
  Mathematics, vol.~19, American Mathematical Society, 1998.

\bibitem{EG}
L.~C. Evans and R.~F. Gariepy, \emph{{M}easure theory and fine properties of
  functions}, Studies in Advanced Mathematics, CRC Press, 1992.

\bibitem{Fri}
V.~Fridman, \emph{{D}as {E}igenwertproblem zum $p$-{L}aplace {O}perator f\"ur
  $p$ gegen $1$}, Ph.D. thesis, Universit\"at zu K\"oln, 2003, URN (NBN):
  urn:nbn:de:hbz:38-10570.

\bibitem{GKY}
Y.~Giga, Y.~Kashima, and N.~Yamazaki, \emph{{L}ocal solvability of a
  constrained gradient system of total variation}, Abstr. Appl. Anal.
  \textbf{8} (2004), 651--682.

\bibitem{KaSchu}
B.~Kawohl and F.~Schuricht, \emph{{D}irichlet problems for the $1$-{L}aplace
  operator, including the eigenvalue problem}, Commun. Contemp. Math.
  \textbf{9} (2007), no.~4, 515--543.

\bibitem{Lad}
O.~A. Lady\v{z}enskaja, \emph{{N}ew equations for the description of the
  motions of viscous incompressible fluids, and global solvability for their
  boundary value problems. ({R}ussian)}, Trudy Mat. Inst. Steklov \textbf{102}
  (1967), 85--104.

\bibitem{Leo}
G.~Leoni, \emph{{A} first course in {S}obolev spaces}, Graduate Studies in
  Mathematics, vol. 105, American Mathematical Society, 2009.

\bibitem{Li}
S.~Littig, \emph{{\"U}ber das {E}igenwertproblem des
  $1$-{L}aplace-{O}perators}, Diploma thesis, Technische Universit\"at
  Dresden, 2010.

\bibitem{LiSchu}
S.~Littig and F.~Schuricht, \emph{{C}onvergence of the eigenvalues of the
  $p$-{L}aplace operator as $p$ goes to $1$}, Preprint (2011).

\bibitem{Liu1}
W.~Liu, \emph{{O}n the stochastic $p$-{L}aplace equation}, J. Math. Anal. Appl.
  \textbf{360} (2009), no.~2, 737--751.

\bibitem{Maz}
V.~G. Maz'ja, \emph{{S}obolev spaces}, Springer-Verlag, Berlin--Heidelberg--New
  York--Tokyo, 1985.

\bibitem{MRSdLT}
A.~Mercaldo, J.~D. Rossi, S.~Segura~de Le{\'o}{}n, and C.~Trombetti,
  \emph{{A}nisotropic $p,q$-{L}aplacian equations when $p$ goes to $1$},
  Nonlinear Anal. \textbf{73} (2010), no.~11, 3546--3560.

\bibitem{MSdLT}
A.~Mercaldo, S.~Segura~de Le{\'o}{}n, and C.~Trombetti, \emph{{O}n the
  behaviour of the solutions to $p$-{L}aplacian equations as $p$ goes to $1$},
  Publ. Mat. \textbf{52} (2008), no.~2, 377--411.

\bibitem{MSdLT2}
A.~Mercaldo, S.~Segura~de Le{\'o}{}n, and C.~Trombetti, \emph{{O}n the solutions to $1$-{L}aplacian equation with {$L^1$}
  data}, J. Funct. Anal. \textbf{256} (2009), no.~8, 2387--2416.

\bibitem{Par}
E.~Parini, \emph{{A}symptotic behaviour of higher eigenfunctions of the
  $p$-{L}aplacian as $p$ goes to $1$}, Ph.D. thesis, Universit\"at zu K\"oln,
  2009, Verlag Dr. Hut, ISBN 978-3-86853-292-0, URN (NBN):
  urn:nbn:de:hbz:38-29479.

\bibitem{Par2}
E.~Parini, \emph{{T}he second eigenvalue of the $p$-{L}aplacian as $p$ goes to
  $1$}, Int. J. Differ. Equ. \textbf{Art. ID 984671} (2010), 23 pp., DOI:
  10.1155/2010/984671.

\bibitem{Schu}
F.~Schuricht, \emph{{A}n alternative derivation of the eigenvalue equation for
  the $1$-{L}aplace operator}, Arch. Math. (Basel) \textbf{87} (2006), no.~6,
  572--577.

\bibitem{Toe2}
J.~M. T{}{\"o}lle, \emph{{V}ariational convergence of nonlinear partial
  differential operators on varying {B}anach spaces}, Ph.D. thesis,
  Universit\"at Bielefeld, 2010,
  \url{http://www.math.uni-bielefeld.de/~bibos/preprints/E10-09-360.pdf}.

\bibitem{Vaz2}
J.~L. V{}{\'a}zquez, \emph{{S}moothing and decay estimates for nonlinear
  diffusion equations: equations of porous medium type}, Oxford lecture series
  in mathematics and its applications, vol.~33, Oxford University Press,
  Oxford, 2006.

\end{thebibliography}
\end{document}